\documentclass[11pt,reqno]{amsart}

\usepackage{amsmath, amsfonts, amsthm, amssymb}

\textwidth=15.0cm \textheight=21.0cm \hoffset=-1.1cm \voffset=-0.5cm

\newtheorem{Theorem}{Theorem}[section]
\newtheorem{Lemma}{Lemma}[section]

\theoremstyle{definition}
\newtheorem{Definition}{Definition}[section]

\theoremstyle{remark}
\newtheorem{Remark}{Remark}[section]

\numberwithin{equation}{section}

\renewcommand{\u}{{\bf u}}

\newcommand{\R}{{\mathbb R}}
\newcommand{\Dv}{{\rm div}}

\newcommand{\m}{{\bf m}}

\def\f{\frac}
\renewcommand{\O}{\Omega}

\def\D{\Delta }


\def\hf1{^\f{1}{1-\xi^2}}

\def\be{\begin{equation}}
\def\en{\end{equation}}
\def\bs{\begin{split}}
\def\es{\end{split}}

\author{Cheng Yu}
\address{Department of Mathematics,  The University of Texas, Austin, Texas 78712.}
\email{yucheng@math.utexas.edu}

\title
[]
{ Energy conservation for the weak solutions of the compressible Navier-Stokes equations}

\keywords{ Energy conservation, Navier-Stokes equations, degenerate viscosity, weak solution.}
\subjclass[2000]{}

\date{\today}

\begin{document}
\begin{abstract} In this paper, we prove the energy conservation for the weak solutions of the compressible Navier-Stokes equations for any time $t>0$, under certain conditions. The results hold for the renormalized solutions of the equations with constant viscosities, as well as the weak solutions of the equations with degenerate viscosity.
Our conditions do not depend on the dimensions.
The energy may conserve on the vacuum for the compressible Navier-Stokes equations with constant viscosities. Our results are the first ones on the energy conservation for the weak solutions of the compressible Navier-Stokes equations.
\end{abstract}

\maketitle
\section{Introduction}
This paper deals with the
energy conservation for the weak solutions of the compressible Navier-Stokes equations, namely
\begin{equation}
\begin{split}
\label{CNS}
&\rho_t+\Dv(\rho\u)=0\\
&(\rho\u)_t+\Dv(\rho\u\otimes\u)+\nabla P-2\mu\D\u-\lambda\nabla\Dv\u=0,
\end{split}
\end{equation}
as well as the following equations with degenerate viscosity
\begin{equation}
\label{NS equation}
\begin{split}
&\rho_t+\Dv(\rho\u)=0\\
&(\rho\u)_t+\Dv(\rho\u\otimes\u)+\nabla P-2\nu\Dv(\rho\mathbb{D}\u)=0,
\end{split}
\end{equation}
respectively with initial data
\begin{equation}
\label{initial data}
\rho|_{t=0}=\rho_0(x),\;\;\;\;\;\rho\u|_{t=0}=\m_0(x)=\rho_0\u_0,
\end{equation}
where $P=\rho^{\gamma},\,\gamma>1,$ denotes the pressure, $\rho$ is the density of fluid, $\u$ stands for the velocity of fluid, $\mathbb{D}\u=\frac{1}{2}[\nabla\u+\nabla^T\u]$ is the strain tensor. The viscosity coefficients $\mu,\,\lambda$ satisfies $\mu>0,\;2\mu+N\lambda\geq 0.$
For the sake of simplicity we will consider the case of bounded domain with periodic boundary conditions in $\R^N$, namely $\O=\mathbb{T}^N$, $N=2, 3$. Here we define $\u_0=0$ on the set $\{x|\rho_0(x)=0\}.$
 Without loss of generality,  we will fix $2\nu=1$ in \eqref{NS equation} from now on.

\vskip0.3cm

As we all known, the global existence of weak solution to \eqref{CNS}, \eqref{initial data} was established in \cite{FNP,F04, L} for any $\gamma>\frac{N}{2}.$
The weak solution of \eqref{NS equation}-\eqref{initial data} was studied in \cite{BD,BD2006,BDL, BDZ,LX, MV, VY}. The purpose of this paper is to provide the sufficient conditions for  the energy conservation of the weak solutions of \eqref{NS equation} first, then extend our result to the weak solutions of \eqref{CNS}.
\vskip0.3cm

A weak solution  $(\rho,\u)$ to \eqref{NS equation}-\eqref{initial data}
 constructed in \cite{VY}
satisfies the following energy inequality \begin{equation}
\label{energy inequality}
\int_{\O}\left(\frac{1}{2}\rho|\u|^2+\frac{\rho^{\gamma}}{\gamma-1}\right)\,dx+\int_0^T\int_{\O}\rho|\mathbb{D}\u|^2\,dx\,dt\leq
\int_{\O}\left(\frac{1}{2}\rho_0|\u_0|^2+\frac{\rho_0^{\gamma}}{\gamma-1}\right)\,dx,
\end{equation}
even if $\rho\geq \underline{\rho}>0.$
A natural question to ask is when a weak solution $(\rho,\u)$ of the compressible Navier-Stokes equations \eqref{NS equation}-\eqref{initial data} satisfies, not only \eqref{energy inequality}, but the stronger version
\begin{equation}
\label{energy equality}
\int_{\O}\left(\frac{1}{2}\rho|\u|^2+\frac{\rho^{\gamma}}{\gamma-1}\right)\,dx+\int_0^T\int_{\O}\rho|\mathbb{D}\u|^2\,dx\,dt=
\int_{\O}\left(\frac{1}{2}\rho_0|\u_0|^2+\frac{\rho_0^{\gamma}}{\gamma-1}\right)\,dx.
\end{equation}
It is
well known that if a solution is smooth enough, then it conserves the energy. Thus, our question is connected to the regularity  of weak solutions.
However, the regularity of global weak solutions of \eqref{NS equation} remains yet mostly open.
Naturally, of particular interest is the question: how badly behaved $(\rho,\u)$ can be so that  it keeps energy conservation? In mathematics, how much regularity is needed for a weak solution to ensure energy equality \eqref{energy equality}? We can ask the same question for the weak solutions of equation \eqref{CNS}.
The main contribution of our
paper is to provide the first result on this topic of the compressible Navier-Stokes equations for any $t>0.$
\vskip0.3cm

As we mentioned before,  the question to ask is how much regularity is necessary to conserve the energy.
 In the context of the incompressible Navier-Stokes equations, the  pioneer study was done by Serrin \cite{Serrin}. He has proved that a weak solution $\u$ conserves its energy globally, provided
 $\u\in L^p(0,T;L^q(\O))$, where
$$\frac{2}{p}+\frac{N}{q}\leq 1,$$
where $N$ is the dimension. Later, Shinbrot \cite{Shinbrot} proved the same conclusion if $$\frac{2}{p}+\frac{2}{q}\leq 1$$
and $q\geq 4.$
Meanwhile, in the context of incompressible Euler equations, this question is linked to
 a famous conjecture of Onsager \cite{O}: energy should be conserved if the solution is Holder continuous with exponent greater than 1/3, while solutions with less regularity
possibly dissipate energy. The first part was solved by \cite{CCFS,CET,Ey} while significant progress has recently been made on the second part \cite{BDIS,BDS}. Very recently,   Isett in \cite{I}  solved the second part of  Onsager conjecture.
  Feireisl-Gwiazda-Swierczewska-Gwiazda-Wiedemann \cite{FGSW} gave sufficient conditions on the regularity of solutions to the inhomogeneous incompressible Euler and the compressible isentropic Euler systems in order for the energy to be conserved in the distribution sense. Leslie-Shvydkoy \cite{LS} showed the energy balance relation for density dependent incompressible Navier-Stokes holds for weak solutions if the velocity, density and pressure belong to a range Besov spaces of smoothness $\frac{1}{3}.$
To our best knowledge, there is no available result for the weak solutions of the compressible Navier-Stokes equations.

\vskip0.3cm

The sufficient conditions for the energy conservation are addressed in this paper. Our approach relies on the idea of Vasseur-Yu \cite{VY}  and Yu \cite{Yu}.
Compared to the incompressible Navier-Stokes equations, we are not able to deduce  that $\u(t,x)$ is continuous at time $t=0$. Our alternative way for the compressible Navier-Stokes equations is to
gain  the continuity of $\rho(t)$ and $(\sqrt{\rho}\u)(t)$ in the strong topology at $t=0$.  To this end, we need  $\nabla\sqrt{\rho}\in L^{\infty}(0,T;L^2(\O))$. Lucky, BD entropy (see \cite{BD,BD2006,BDL}) gives us such an estimate on density for the degenerate compressible Navier-Stokes equations. However, we need
additional condition on the density to study the weak solutions of \eqref{CNS}. Meanwhile, it is crucial to rely on Lemma \ref{Lions's lemma} to handle the nonlinear compositions $(\rho\u)_t$ and $\rho\u\otimes\u$.
Here we have to mention that our approach allows us to handle vacuum states for the weak solutions of \eqref{CNS}.

\vskip0.3cm

The question addressed here is motivated by the fact that the energy conservation is fundamental both in the physical theory as well as in the mathematical study of
the fluid dynamics. It is natural, therefore, to seek a
rigorous theory which accommodates this question. The results of
this paper effectively achieve this goal by providing a certain condition for the weak solution, for any $t>0$.
To address our main result, we now give a precise definition and discussion of our weak solutions $(\rho,\u)$ to the initial value problem \eqref{NS equation}-\eqref{initial data} in the following sense.
\begin{Definition}
\label{definition of weak soution}
The $(\rho,\u)$ is called a global weak solution to \eqref{NS equation}-\eqref{initial data}, if $(\rho,\u)$ satisfies the following ones, for any $t\in[0,T]$,
\begin{itemize}
\item \eqref{NS equation} holds in $\mathcal{D'}((0,T)\times\O))$ and the following is satisfied\\
$\rho\geq 0, \quad \rho \in L^{\infty}([0,T];L^{\gamma}(\O)),$
\\$ \nabla\rho^{\frac{\gamma}{2}}\in L^2(0,T;L^2(\O)),\quad\nabla\sqrt{\rho}\in L^{\infty}(0,T;L^2(\O)),$\\
           $  \sqrt{\rho}\u \in L^{\infty}(0,T;L^2(\O)),\quad\sqrt{\rho}\nabla\u\in L^2(0,T;L^2(\O)),$

\item  \eqref{initial data} holds in $\mathcal{D'}(\O)$.
 \end{itemize}
\end{Definition}

The global existence of weak solution to \eqref{NS equation} under the above definition was proved in \cite{VY}.
The energy inequality \eqref{energy inequality} holds for almost every $t\in[0,T]$ if the density is bounded away from zero.
Here we state the following result on the energy conservation for the weak solution.
\begin{Theorem}
\label{main result} Let $(\rho,\u)$ be a weak solution of \eqref{NS equation}-\eqref{initial data} in the sense of Definition \ref{definition of weak soution}.
Moreover,  if

\begin{equation}
\label{bounded away from zero}
0<\underline{\rho}\leq \rho(t,x)\leq \bar{\rho}<\infty,
\end{equation}

\begin{equation}
\label{condition for velocity}
\u \in L^p(0,T;L^q(\O))\quad\text{ for any }  \frac{1}{p}+\frac{1}{q}\leq \frac{5}{12}\text{ with }\; q\geq 6 ,
\end{equation}

and
\begin{equation}
\label{additional condition on initial data}
\sqrt{\rho_0}\u_0\in L^4(\O),
\end{equation}

 then such a weak solution
 $(\rho,\u)$  satisfies \eqref{energy equality}
 for any $t\in [0,T].$
\end{Theorem}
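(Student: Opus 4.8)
The plan is to mollify the equations in space, test against the mollified velocity, and pass to the limit, using \eqref{bounded away from zero} and \eqref{condition for velocity} to control all commutators. First I would introduce a standard spatial mollifier $\omega_\varepsilon$ and write $f^\varepsilon = f * \omega_\varepsilon$. Mollifying the momentum equation \eqref{NS equation} gives
\be
\label{mollified}
(\rho\u)^\varepsilon_t + \Dv(\rho\u\otimes\u)^\varepsilon + \nabla(\rho^\gamma)^\varepsilon - \Dv(\rho\mathbb{D}\u)^\varepsilon = 0
\en
in the classical sense in the interior. I would then pair \eqref{mollified} with $\u^\varepsilon$ and integrate over $\O\times(0,t)$. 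The diffusion term integrates by parts to produce $\int_0^t\int_\O (\rho\mathbb{D}\u)^\varepsilon : \nabla\u^\varepsilon$, which by the Friedrichs/DiPerna–Lions commutator lemma (Lemma \ref{Lions's lemma}) converges to $\int_0^t\int_\O \rho|\mathbb{D}\u|^2$ once one knows $\u\in L^2(0,T;H^1)$ — which follows from $\sqrt\rho\,\nabla\u\in L^2$ together with $\rho\geq\underline\rho$. The pressure term, after integration by parts, becomes $-\int_0^t\int_\O (\rho^\gamma)^\varepsilon\,\Dv\u^\varepsilon$; using the mass equation $\rho_t = -\Dv(\rho\u)$ and the bounds $\rho^{\gamma/2}\in L^2_tH^1_x$, $\rho\in L^\infty_tL^\gamma_x$, this should reproduce $-\frac{d}{dt}\int_\O \frac{\rho^\gamma}{\gamma-1}$ in the limit after a second commutator estimate for the product $\rho\u\cdot\nabla\rho^{\gamma-1}$.

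The heart of the argument is the convective and time-derivative pair. Writing $(\rho\u)^\varepsilon_t\cdot\u^\varepsilon + \Dv(\rho\u\otimes\u)^\varepsilon\cdot\u^\varepsilon$ and using the mass equation to rewrite things, the target is $\frac{d}{dt}\int_\O\frac12\rho|\u|^2$ in the limit $\varepsilon\to0$. This is exactly where Lemma \ref{Lions's lemma} is used: one needs $(\rho\u\otimes\u)^\varepsilon \to \rho\u\otimes\u$ and $(\rho\u)^\varepsilon \to \rho\u$ strongly enough that the commutators vanish. The quantitative condition \eqref{condition for velocity} is precisely what makes $\rho\u\otimes\u \in L^{p'}_tL^{q'}_x$ with exponents good enough for the commutator to go to zero: since $\rho$ is bounded, this reduces to $\u\otimes\u\in L^{p/2}_tL^{q/2}_x$, and $\frac1p+\frac1q\leq\frac5{12}$ with $q\geq 6$ is the sharp bookkeeping that closes the Hölder pairing against $\nabla\u^\varepsilon\in L^2_tL^2_x$ after accounting for the $\tfrac13$-derivative loss in the mollification. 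I would isolate the worst term, $\int_0^t\int_\O \rho\u\otimes\u : \nabla\u^\varepsilon$ versus its mollified counterpart, and verify the index inequality explicitly there.

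Having shown that the mollified energy identity passes to the limit for a.e.\ $t$, the remaining and genuinely new difficulty — as the introduction stresses — is continuity \emph{up to $t=0$}. From $\nabla\sqrt\rho\in L^\infty_tL^2_x$, $\rho\in L^\infty_tL^\gamma_x$ and the mass equation, $\rho\in C([0,T];L^p)$ for suitable $p$, and in particular $\rho(t)\to\rho_0$ strongly; combined with $\rho_t\in L^2_tH^{-1}_x$ this upgrades to the needed strong convergence. For the kinetic part one shows $\sqrt\rho\,\u\in C([0,T];L^2)$ via $(\rho\u)_t\in L^1_tW^{-1,1}_x$-type bounds and weak continuity, then promotes weak to strong convergence at $t=0$ using the extra integrability \eqref{additional condition on initial data}: $\sqrt{\rho_0}\u_0\in L^4$ gives the equi-integrability needed to rule out concentration as $t\downarrow0$, so $\liminf_{t\to0}\int_\O\rho|\u|^2 \geq \int_\O\rho_0|\u_0|^2$, while the energy inequality \eqref{energy inequality} gives the reverse. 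The main obstacle is this last point — converting the weak lower-semicontinuity of the kinetic energy at $t=0$ into an actual limit — and condition \eqref{additional condition on initial data} together with the BD-type bound $\nabla\sqrt\rho\in L^\infty_tL^2_x$ is exactly what is engineered to overcome it. Once both $t=0$ and a.e.\ $t>0$ are handled, \eqref{energy equality} holds for all $t\in[0,T]$.
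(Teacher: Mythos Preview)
Your overall architecture---mollify, test against the mollified velocity, invoke the Lions commutator lemma, then separately establish strong continuity at $t=0$---matches the paper. But your account of \emph{why} the exponent condition $\tfrac{1}{p}+\tfrac{1}{q}\le\tfrac{5}{12}$, $q\ge 6$ arises is not correct, and this is where the actual work sits.

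You locate the worst term in the pairing of $\rho\u\otimes\u$ against $\nabla\u^\varepsilon$ and invoke a ``$\tfrac13$-derivative loss in the mollification''. There is no Onsager/Besov-type $\tfrac13$ mechanism here. The decisive commutator is the time-derivative one,
\[
A \;=\; \int \psi(t)\,\overline{\u}\cdot\bigl[\,\overline{(\rho\u)_t}-(\rho\,\overline{\u})_t\,\bigr],
\]
and Lemma~\ref{Lions's lemma} with $\partial=\partial_t$, $f=\rho$, $g=\u$ bounds it by $\|\rho_t\|\,\|\u\|^2$. The controlling input is therefore the integrability of $\rho_t$: writing $\rho_t=-2\sqrt{\rho}\,\u\cdot\nabla\sqrt{\rho}-\rho\,\Dv\u$ and using the BD bound $\nabla\sqrt{\rho}\in L^\infty_tL^2_x$ gives only $\rho_t\in L^2_tL^{2s/(s+2)}_x+L^2_tL^2_x$ (this is Lemma~\ref{estimate on density t}). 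Closing the H\"older pairing of that against two copies of $\u\in L^r_tL^s_x$ forces $r\ge 4$ and $s\ge 6$; interpolating with $\u\in L^\infty_tL^2_x$ (Lemma~\ref{LpLq estimate}) then yields exactly $\tfrac{1}{p}+\tfrac{1}{q}\le\tfrac{5}{12}$ with $q\ge 6$. So the stronger-than-Shinbrot threshold is dictated by the limited spatial integrability of $\rho_t$ (ultimately by $\nabla\sqrt{\rho}\in L^2_x$), not by any fractional derivative loss; the same structure governs the convective commutator $B$ through $\nabla(\rho\u)$. A related technical point: the paper mollifies in \emph{space--time} and tests with $\overline{\psi(t)\overline{\u}}$, precisely so that Lemma~\ref{Lions's lemma} applies with $\partial=\partial_t$. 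With purely spatial mollification you would have to justify that time-derivative commutator by a separate argument.

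Your treatment of $t=0$ is right in spirit, but the role of $\sqrt{\rho_0}\u_0\in L^4$ is more concrete than ``equi-integrability ruling out concentration''. In the paper the cross term $W=2\,\text{ess}\limsup_{t\to0}\int\sqrt{\rho_0}\u_0\cdot(\sqrt{\rho_0}\u_0-\sqrt{\rho}\u)$ is rewritten, using $\rho\ge\underline{\rho}>0$, as an integral of $\tfrac{\sqrt{\rho_0}\u_0}{\sqrt{\rho}}$ against $(\rho_0\u_0-\rho\u)$ plus a piece involving $(\sqrt{\rho}-\sqrt{\rho_0})$; the $L^4$ hypothesis is exactly what lets $|\sqrt{\rho_0}\u_0|^2/\sqrt{\rho}$ pair in $L^2$ against the strongly convergent $\sqrt{\rho}$ and lets the remainder pair against the weakly-$L^2$-continuous $\rho\u$.
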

\begin{Remark}
The condition \eqref{additional condition on initial data} could be replaced by \eqref{initial velocity assp}.
We will give detail on this issue at the end of this paper.
\end{Remark}

\begin{Remark}
On the incompressible Navier-Stokes equations,  Shinbrot \cite{Shinbrot} has shown that a weak solution can remain its energy if
$$\u\in L^p(0,T;L^q(\O))$$
for any $\frac{1}{p}+\frac{1}{q}\leq \frac{1}{2}$ with $q\geq 4.$ A new proof of Shinbrot's result is given in \cite{Yu}.
Our condition \eqref{condition for velocity} for the compressible version is slightly stronger than the incompressible ones.
Because we have to rely on the commutator lemma (Lemma \ref{Lions's lemma}) to handle the terms $(\rho\u)_t$ and $\Dv(\rho\u\otimes\u)$, which needs additional regularity on $\u$.
\end{Remark}


\vskip0.3cm

Next we  extend Theorem \ref{main result} to the weak solution of \eqref{CNS} in the following theorem. The global existence of renormalized weak solutions to \eqref{CNS} was established in \cite{L,FNP,F04} for any $\gamma>\frac{N}{2}$. In particular,  we need additional restriction \eqref{bounded away from zero for CNS} for density, which allows us to have the continuity of $\rho(t)$ and $(\sqrt{\rho}\u)(t)$ in the strong topology at $t=0$. Meanwhile, $\u$ is uniformly bounded in $L^2(0,T;H^1(\O))$ for the viscosity constants case. Thus, our approach  allows us to handle vacuum states.
  With the restriction \eqref{bounded away from zero for CNS} in mind, we can modify the proof of Theorem \ref{main result} to show the following result. One slight difference is to show $W=0$ in \eqref{show B is zero} for getting the continuity of $\rho(t)$ and $(\sqrt{\rho}\u)(t)$ in the strong topology at $t=0$. We will give the detail on this point at the end of this paper.
\begin{Theorem}
\label{result for CNS} Let $(\rho,\u)$ be a weak solution of \eqref{CNS}, \eqref{initial data} in the sense of \cite{FNP,Lions}.
Moreover,  if
\begin{equation}
\label{bounded away from zero for CNS}
0\leq \rho(t,x)\leq \bar{\rho}<\infty,\;\;\text{ and }\nabla\sqrt{\rho}\in L^{\infty}(0,T;L^2(\O)),
\end{equation}

\begin{equation}
\label{condition for velocity}
\u \in L^p(0,T;L^q(\O))\quad\text{ for any }  \frac{1}{p}+\frac{1}{q}\leq\frac{5}{12}, \text{ and } q\geq 6 ,
\end{equation}

and
\begin{equation}
\label{initial velocity assp}
\u_0\in L^{k}(\O), \;\frac{1}{k}+\frac{1}{q}\leq \frac{1}{2},
\end{equation}
 then such a weak solution
 $(\rho,\u)$  satisfies
\begin{equation*}
\begin{split}
&\int_{\O}\left(\frac{1}{2}\rho|\u|^2+\frac{\rho^{\gamma}}{\gamma-1}\right)\,dx+2\mu\int_0^T\int_{\O}|\nabla\u|^2\,dx\,dt
\\&\quad\quad\quad\quad\quad\quad+\lambda\int_0^T\int_{\O}|\Dv\u|^2\,dx\,dt=
\int_{\O}\left(\frac{1}{2}\rho_0|\u_0|^2+\frac{\rho_0^{\gamma}}{\gamma-1}\right)\,dx
\end{split}
\end{equation*}
 for any $t\in [0,T].$
\end{Theorem}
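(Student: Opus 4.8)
The plan is to mimic the proof of Theorem \ref{main result} with the modifications dictated by \eqref{bounded away from zero for CNS}. The first step is to mollify the momentum equation in \eqref{CNS} in space. Writing $f^\epsilon = f * \eta_\epsilon$ for a standard spatial mollifier, I would apply $\eta_\epsilon *$ to the second equation of \eqref{CNS} and test against $\u^\epsilon$ (or, more precisely, against an appropriate regularization built from the momentum and density so that the kinetic-energy term is handled correctly, as in \cite{VY,Yu}). The continuity equation is treated by a parallel renormalization, so that $\tfrac12 (\rho|\u|^2)^\epsilon$ and $(\rho^\gamma/(\gamma-1))^\epsilon$ evolve with controllable error terms. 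The key analytic input is the commutator estimate of Lemma \ref{Lions's lemma} (Lions's lemma), which allows one to replace $(\rho\u)_t^\epsilon$ and $\Dv(\rho\u\otimes\u)^\epsilon$ by $\rho\u_t^\epsilon$-type quantities plus commutators that vanish in $L^1$ as $\epsilon\to 0$; this is exactly where the regularity $\u\in L^p(0,T;L^q(\O))$ with $\tfrac1p+\tfrac1q\le\tfrac{5}{12}$, $q\ge 6$, together with $\nabla\sqrt\rho\in L^\infty(0,T;L^2(\O))$ and $\rho\le\bar\rho$, is consumed. The viscous terms $-2\mu\D\u - \lambda\na\Dv\u$ tested against $\u^\epsilon$ pass to the limit directly, since $\u\in L^2(0,T;H^1(\O))$ from the energy inequality for \eqref{CNS}, yielding $2\mu\int|\na\u|^2 + \lambda\int|\Dv\u|^2$; note that here, unlike in \eqref{NS equation}, no weight $\rho$ appears, so vacuum causes no trouble in the dissipation term.

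The second step, and the one that replaces the ``continuity of $\u$ at $t=0$'' argument unavailable in the compressible setting, is to establish strong continuity of $\rho(t)$ in $L^2(\O)$ (equivalently $L^\gamma$) and of $\sqrt\rho\,\u(t)$ in $L^2(\O)$ as $t\downarrow 0$. For the density this follows from the continuity equation: $\rho\in C([0,T];L^2_{\mathrm{weak}})$ together with the bound $\rho\le\bar\rho$ and $\na\sqrt\rho\in L^\infty(0,T;L^2)$ gives, after the usual renormalization argument, that $\int\rho^2(t)\,dx$ and $\int|\na\sqrt\rho|^2$ behave continuously at $t=0$, upgrading weak to strong convergence. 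For $\sqrt\rho\,\u$ one argues as in \cite{VY}: from the momentum equation $\sqrt\rho\,\u\in C([0,T];L^2_{\mathrm{weak}})$, and one must show the quantity $W := \tfrac12\int\rho|\u|^2\,dx\big|_{t=0^+} - \tfrac12\int\rho_0|\u_0|^2\,dx$, appearing in \eqref{show B is zero}, vanishes. This uses the assumption \eqref{initial velocity assp}: $\u_0\in L^k$ with $\tfrac1k+\tfrac1q\le\tfrac12$ makes $\rho_0\u_0\cdot\u \in L^1$ with a quantitative bound so that the lower-semicontinuity gap closes, exactly as \eqref{additional condition on initial data} did for \eqref{NS equation}.

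The third step is to assemble the pieces: after letting $\epsilon\to0$ one obtains the energy equality on $[t_1,t_2]$ for a.e. $0<t_1<t_2\le T$, and then using the strong continuity at $t=0$ established in step two one lets $t_1\downarrow 0$ to replace $\int(\tfrac12\rho|\u|^2 + \tfrac{\rho^\gamma}{\gamma-1})(t_1)$ by the initial data $\int(\tfrac12\rho_0|\u_0|^2+\tfrac{\rho_0^\gamma}{\gamma-1})$. The pressure term $\int P\,\Dv\u$ is handled via the renormalized continuity equation for $\rho^\gamma$ (available since $(\rho,\u)$ is a renormalized solution in the sense of \cite{FNP,Lions} and $\na\rho^{\gamma/2}\in L^2$), so that $\tfrac{d}{dt}\int\tfrac{\rho^\gamma}{\gamma-1} = -\int\rho^\gamma\Dv\u$ with no error. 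The main obstacle, as in Theorem \ref{main result}, is verifying that the commutators produced by Lemma \ref{Lions's lemma} tend to zero under precisely the exponent condition $\tfrac1p+\tfrac1q\le\tfrac{5}{12}$ with $q\ge6$: one must interpolate $\u$ against $\sqrt\rho\,\na\u\in L^2_{t,x}$ and $\na\sqrt\rho\in L^\infty_tL^2_x$ to control the mixed derivative terms $\na\rho\cdot(\u\otimes\u)$ and $\rho\,\u\cdot\na\u$, and check that the resulting Hölder bookkeeping is consistent; this is the heart of the matter, while the vacuum set is harmless because every dangerous term carries a factor of $\rho$ or $\sqrt\rho$ except the dissipation, which is $\rho$-free by the structure of \eqref{CNS}.
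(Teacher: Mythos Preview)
Your proposal is correct and follows essentially the same route as the paper: reuse the commutator argument from Theorem~\ref{main result}, observe that for \eqref{CNS} the bound $\nabla\u\in L^2(0,T;L^2(\O))$ comes directly from the energy inequality (so no positive lower bound on $\rho$ is needed and vacuum is harmless), and then show $W=0$ at $t=0$ using the hypothesis $\u_0\in L^k$ with $\tfrac{1}{k}+\tfrac{1}{q}\le\tfrac{1}{2}$ in place of the assumption $\sqrt{\rho_0}\u_0\in L^4$ used for \eqref{NS equation}.

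Two minor deviations from the paper are worth flagging. First, the paper mollifies in \emph{space--time}, not just in space; this is what allows Lemma~\ref{Lions's lemma} to be applied to $(\rho\u)_t$ directly, since the lemma requires $\partial f\in L^p$ for the derivative in question. A purely spatial mollification would force you to handle the time derivative by a separate integration-by-parts argument, which is workable but not what the paper does. Second, you propose to treat the pressure via the renormalized continuity equation for $\rho^\gamma$, whereas the paper rewrites $\nabla\rho^\gamma=\tfrac{\gamma}{\gamma-1}\rho\nabla\rho^{\gamma-1}$ and runs it through the same commutator machinery (the terms $E,E_{21},E_{31},E_{32}$). Both approaches succeed under the hypotheses $\rho\le\bar\rho$ and $\nabla\sqrt\rho\in L^\infty_tL^2_x$; yours is arguably more direct here since renormalized solutions are assumed, while the paper's keeps the argument uniform with the degenerate case.
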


\begin{Remark}
An interesting point is that the density may vanish in \eqref{bounded away from zero for CNS}. Thus,
Theorem \ref{result for CNS} indicates that the energy may conserve even on the vacuum.
\end{Remark}

\vskip0.3cm
\bigskip

\section{Proof of main results}

The main object of this section is to prove our main results, including Theorem \ref{main result} and Theorem \ref{result for CNS}.
We devote subsection 2.1 to a proof of Theorem \ref{main result}, and subsection 2.2 to a proof of Theorem \ref{result for CNS}.

\subsection{Proof of Theorem \ref{main result}}
Note that, for any weak solution $(\rho,\u)$, with condition \eqref{bounded away from zero}, it satisfies
\begin{equation}
\label{bounded in H1}
\|\u\|_{L^{\infty}(0,T;L^2(\O))}\leq C<\infty,\quad
\|\nabla\u\|_{L^2(0,T;L^2(\O))}\leq C<\infty.
\end{equation}
Now, let us to give an estimate on $\rho_t$ in the following lemma.
\begin{Lemma}
\label{estimate on density t} For any weak solution $(\rho,\u)$ in the sense of Definition \ref{definition of weak soution} with additional conditions \eqref{bounded away from zero}-\eqref{condition for velocity}, then
$\rho_t$ is bounded in $L^p(0,T;L^{\frac{2q}{q+2}}(\O))+L^{2}(0,T;L^2(\O))$. In particular, $\rho_t$ is bounded in $L^2(0,T;L^{\frac{2q}{q+2}}(\O))$ if p$\geq2.$
\end{Lemma}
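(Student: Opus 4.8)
The plan is to read $\rho_t$ off the continuity equation $\rho_t = -\Dv(\rho\u) = -\na\rho\cdot\u - \rho\,\Dv\u$ and estimate the two resulting terms in different function spaces. For the first term, I would use that $\na\rho$ is controlled through $\na\sqrt{\rho}$: since $\rho$ is bounded above and below by \eqref{bounded away from zero}, we have $\na\rho = 2\sqrt{\rho}\,\na\sqrt{\rho}$, so $\na\rho \in L^{\infty}(0,T;L^2(\O))$. Pairing this with $\u \in L^p(0,T;L^q(\O))$ from \eqref{condition for velocity} and applying H\"older in space with exponents $\frac12 = \frac1q + \frac{q+2}{2q} - \frac12$, i.e. $\frac1{1/2} = \frac1q + \frac1{2q/(q+2)}$... more simply, $\frac{q+2}{2q} = \frac1q + \frac12$, so $\na\rho\cdot\u \in L^p(0,T;L^{\frac{2q}{q+2}}(\O))$. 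For the second term, $\rho\,\Dv\u$: here $\rho$ is bounded and $\Dv\u \in L^2(0,T;L^2(\O))$ by \eqref{bounded in H1}, hence $\rho\,\Dv\u \in L^2(0,T;L^2(\O))$. Adding the two gives $\rho_t \in L^p(0,T;L^{\frac{2q}{q+2}}(\O)) + L^2(0,T;L^2(\O))$, which is the first assertion.

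For the refinement when $p \ge 2$: on the periodic box $\O = \mathbb{T}^N$ with finite measure, $L^2(\O) \hookrightarrow L^{\frac{2q}{q+2}}(\O)$ continuously because $\frac{2q}{q+2} \le 2$ (equivalently $q \ge \frac{2q}{q+2}$, which holds for all $q>0$), so the $L^2(0,T;L^2(\O))$ piece embeds into $L^2(0,T;L^{\frac{2q}{q+2}}(\O))$. Likewise, if $p \ge 2$ then $L^p(0,T;X) \hookrightarrow L^2(0,T;X)$ on the finite time interval $(0,T)$ by H\"older in time. Hence both summands sit inside $L^2(0,T;L^{\frac{2q}{q+2}}(\O))$, and so does $\rho_t$.

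I do not expect a serious obstacle here; the lemma is essentially bookkeeping with H\"older's inequality and Sobolev embeddings on a bounded domain. The only point requiring a moment's care is the exponent matching: one must check that $\frac{1}{2} + \frac{1}{q} = \frac{q+2}{2q}$ (true) so that the product $\na\rho \cdot \u$ lands exactly in $L^{\frac{2q}{q+2}}$, and that the hypothesis $q \ge 6$ from \eqref{condition for velocity} is more than enough to make all the embeddings valid (indeed $q\ge 2$ suffices for this particular claim; the stronger $q\ge 6$ is needed later for the commutator estimates, not here). A secondary subtlety is that the decomposition $\rho_t = -\na\rho\cdot\u - \rho\,\Dv\u$ is justified in $\mathcal{D}'((0,T)\times\O)$ by Definition \ref{definition of weak soution}, and the regularity just derived shows each piece is in fact a genuine function in the claimed Bochner space, so the identity upgrades from distributions to the stated spaces.
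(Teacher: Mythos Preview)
Your proof is correct and follows essentially the same approach as the paper: the paper writes the continuity equation as $\rho_t = -2\sqrt{\rho}\,\u\cdot\nabla\sqrt{\rho} - \sqrt{\rho}\sqrt{\rho}\,\Dv\u$ and invokes the same ingredients (Definition \ref{definition of weak soution}, \eqref{bounded away from zero}, \eqref{condition for velocity}) without spelling out the H\"older exponents. Your version is in fact more detailed, since you also justify the ``in particular'' clause via the embeddings $L^2(\O)\hookrightarrow L^{\frac{2q}{q+2}}(\O)$ and $L^p(0,T)\hookrightarrow L^2(0,T)$, which the paper leaves implicit.
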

\begin{proof}
 Note that
$$\rho_t=-2\sqrt{\rho}\u\cdot\nabla\sqrt{\rho}-\sqrt{\rho}\sqrt{\rho}\Dv\u,$$
consequently $\rho_t$ is bounded in  $L^p(0,T;L^{\frac{2q}{q+2}}(\O))+L^{2}(0,T;L^2(\O))$, thanks to the estimates in definition \ref{definition of weak soution}, \eqref{bounded away from zero} and \eqref{condition for velocity}.
\end{proof}
\begin{Remark}
  For any weak solution $(\rho,\u)$ satisfied B-D entropy \cite{BD,BD2006,BDL}, $\nabla\sqrt{\rho}$ is bounded in $L^{\infty}(0,T;L^2(\O))$. Thus, Lemma \ref{estimate on density t} gives us that $\rho_t$ is locally integrable.
   It is crucial to have the estimate of $\nabla\sqrt{\rho}$ for making commutator estimates.
\end{Remark}
Here we make use of $L^p-L^q$ inequality to deduce the following lemma directly.
\begin{Lemma}
\label{LpLq estimate}
If $\u\in L^{\infty}(0,T;L^2(\O))\cap L^p(0,T;L^q(\O)),$ then there exists some $\alpha\in(0,1)$ such that
$\u\in L^r(0,T;L^s(\O)),$ for any $$\frac{1}{r}=\frac{1-\alpha}{p},$$
and $$\frac{1}{s}=\frac{\alpha}{2}+\frac{1-\alpha}{q}.$$
\end{Lemma}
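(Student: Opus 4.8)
The plan is to prove this by a pointwise-in-time interpolation (Hölder) inequality in the space variable, followed by a Hölder inequality in the time variable, exploiting the two given integrability properties of $\u$. Nothing here is delicate: the whole content is bookkeeping of exponents.

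First I would fix $t\in(0,T)$ and let $\alpha\in(0,1)$ be the parameter in the statement. Since $\frac1s=\frac\alpha2+\frac{1-\alpha}{q}$ writes $\frac1s$ as a convex combination of $\frac12$ and $\frac1q$, the exponent $s$ lies between $2$ and $q$ (in the application $q\ge 6$, so $2<s<q$), and the standard interpolation inequality for Lebesgue norms applies:
\begin{equation*}
\|\u(t,\cdot)\|_{L^s(\O)}\le \|\u(t,\cdot)\|_{L^2(\O)}^{\alpha}\,\|\u(t,\cdot)\|_{L^q(\O)}^{1-\alpha}.
\end{equation*}
Raising this to the power $r$ and integrating over $(0,T)$ gives
\begin{equation*}
\int_0^T\|\u(t,\cdot)\|_{L^s(\O)}^{r}\,dt\le \|\u\|_{L^\infty(0,T;L^2(\O))}^{\alpha r}\int_0^T\|\u(t,\cdot)\|_{L^q(\O)}^{(1-\alpha)r}\,dt .
\end{equation*}

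Next I would pin down $r$ so that the remaining time integral is exactly a power of the $L^p(0,T;L^q(\O))$ norm, i.e. impose $(1-\alpha)r=p$, which is precisely $\frac1r=\frac{1-\alpha}{p}$. With this choice the right-hand side is bounded by $\|\u\|_{L^\infty(0,T;L^2(\O))}^{\alpha r}\,\|\u\|_{L^p(0,T;L^q(\O))}^{p}<\infty$, so $\u\in L^r(0,T;L^s(\O))$ for the stated pair $(r,s)$. One should only record that $\alpha\in(0,1)$ guarantees $p<r<\infty$ and $2<s<q$, so both the spatial interpolation inequality and the time integrability are genuine; as $\alpha$ ranges over $(0,1)$ the pair $(r,s)$ sweeps the interpolation segment between $(p,q)$ and $(\infty,2)$. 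There is no real obstacle; the only thing to keep straight is the division of labor, namely that the $L^\infty_tL^2_x$ bound absorbs the factor $\|\u(t,\cdot)\|_{L^2}^{\alpha r}$ while the $L^p_tL^q_x$ bound controls the residual time integral once $r$ is fixed by $(1-\alpha)r=p$.
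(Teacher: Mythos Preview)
Your argument is correct and is precisely the standard interpolation bookkeeping the paper has in mind: the paper does not write out a proof but simply says the lemma follows directly from the $L^p$--$L^q$ inequality, which is exactly the spatial H\"older interpolation followed by the choice $(1-\alpha)r=p$ that you carry out.
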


Now, let us to define $$\overline{f}(t,x)=\eta_{\varepsilon}* f(t,x),\;\;\; t>\varepsilon,$$
where $\eta_{\varepsilon}=\frac{1}{\varepsilon^{N+1}}\eta(\frac{t}{\varepsilon},\frac{x}{\varepsilon})$, and $\eta(t,x)\geq 0$ is a smooth even function compactly supported in the space time ball of radius $1$, and with integral equal to $1$.

The following lemma is crucial in this current paper,  which was proved by Lions in \cite{L}. Here we adopt the following statement of \cite{LV}.
 \begin{Lemma}
 \label{Lions's lemma}
Let $\partial$ be a partial derivative in space or time. Let $f$, $\partial f \in L^p(\R^+\times \O)$, $g\in L^q (\R^+\times \O)$ with $1\leq p,\,q \leq \infty,$ and $\frac{1}{p}+\frac{1}{q}\leq 1. $ Then, we have
$$\|\overline{\partial(fg)}-\partial(f\,\overline{g})\|_{L^r(\R^+\times\O)}\leq C\|\partial f\|_{L^p(\R^+\times\O)}\|g\|_{L^q(\R^+\times\O)}$$
for some constant $C>0$ independent of $\varepsilon$, $f$ and $g$, and with $\frac{1}{r}=\frac{1}{p}+\frac{1}{q}$. In addition,
$$\overline{\partial{(fg)}}-\partial{(f\,\overline{g})}\to 0\quad\text{ in } L^r(\R^+\times\O)$$
as $\varepsilon\to 0$ if $r<\infty.$
 \end{Lemma}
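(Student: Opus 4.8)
The plan is to recognize Lemma \ref{Lions's lemma} as the Friedrichs--DiPerna--Lions commutator estimate and to prove it in two stages: an explicit integral representation yielding a uniform-in-$\varepsilon$ bound, followed by a density argument that upgrades the bound to convergence. First I would rewrite the commutator. Since convolution commutes with differentiation, $\overline{\partial(fg)}=\partial\,\overline{fg}=(\partial\eta_\varepsilon)*(fg)$, while the product rule gives $\partial(f\,\overline g)=(\partial f)\,\overline g+f\,[(\partial\eta_\varepsilon)*g]$. Subtracting and grouping the two convolution terms, at a space-time point $z=(t,x)$,
\[
R_\varepsilon(z):=\overline{\partial(fg)}(z)-\partial(f\,\overline g)(z)=\int\partial\eta_\varepsilon(z-y)\,[f(y)-f(z)]\,g(y)\,dy-(\partial f)(z)\,\overline g(z).
\]
The second term is harmless: by H\"older with $\frac1r=\frac1p+\frac1q$ (legitimate because $\frac1p+\frac1q\le1$) and Young's inequality $\|\overline g\|_q\le\|g\|_q$, it is bounded in $L^r$ by $\|\partial f\|_p\|g\|_q$. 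The whole difficulty sits in the first integral $I_\varepsilon$.

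For the uniform bound I would substitute $y=z-\varepsilon w$, use the scaling $\partial\eta_\varepsilon(\varepsilon w)=\varepsilon^{-(N+2)}(\partial\eta)(w)$ and the fundamental theorem of calculus $f(z-\varepsilon w)-f(z)=-\varepsilon\int_0^1\nabla f(z-s\varepsilon w)\cdot w\,ds$, so that the singular factor $\varepsilon^{-1}$ is absorbed and $I_\varepsilon(z)=-\int(\partial\eta)(w)\big(\int_0^1\nabla f(z-s\varepsilon w)\cdot w\,ds\big)g(z-\varepsilon w)\,dw$. Taking the $L^r(\R^+\times\O)$-norm in $z$, Minkowski's integral inequality moves the norm inside the $w$- and $s$-integrals, and H\"older (again with $\frac1r=\frac1p+\frac1q$) together with the translation-invariance of the $L^p$ and $L^q$ norms yields $\|I_\varepsilon\|_r\le\big(\int|\nabla\eta(w)|\,|w|\,dw\big)\|\nabla f\|_p\|g\|_q$. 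Collecting the two pieces gives the stated estimate with $C=C(\eta)$ independent of $\varepsilon$; here $\|\partial f\|_p$ is read as controlled by $\|\nabla f\|_p$, the single partial derivative entering sharply only in the limit through the identity $\int(\partial_j\eta)(w)\,w_k\,dw=-\delta_{jk}$.

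For the convergence $R_\varepsilon\to0$ in $L^r$ (needing $r<\infty$) I would run the standard density argument, exploiting that $R_\varepsilon$ is bilinear in $(f,g)$. When $f$ and $g$ are smooth, $\overline{\partial(fg)}$ and $\partial(f\,\overline g)$ both converge to $\partial(fg)$ in $L^r$, so $R_\varepsilon\to0$ directly; equivalently the pointwise limit of $I_\varepsilon$ is $-g(z)\,\nabla f(z)\cdot\int(\partial\eta)(w)\,w\,dw=(\partial f)(z)\,g(z)$, which cancels the second term. For general $f\in W^{1,p}$ and $g\in L^q$, pick smooth $f_n\to f$ in $W^{1,p}$ and $g_n\to g$ in $L^q$; by bilinearity $R_\varepsilon[f,g]-R_\varepsilon[f_n,g_n]=R_\varepsilon[f-f_n,g]+R_\varepsilon[f_n,g-g_n]$, and the uniform bound makes both remainders small uniformly in $\varepsilon$, so $\limsup_{\varepsilon\to0}\|R_\varepsilon[f,g]\|_r$ is dominated by $\|R_\varepsilon[f_n,g_n]\|_r\to0$ plus an arbitrarily small error.

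The main obstacle is the uniform bound: producing a constant independent of $\varepsilon$ forces one to absorb the singular factor $\varepsilon^{-1}$ carried by $\partial\eta_\varepsilon$ into the difference $f(y)-f(z)$ via the fundamental theorem of calculus, and to track the exponents so that H\"older closes exactly at $\frac1r=\frac1p+\frac1q$ --- which is precisely where the hypothesis $\frac1p+\frac1q\le1$ is used. A secondary point is that the mollification lives on the half-line $\R^+$ in time (hence the restriction $t>\varepsilon$ in the definition of $\overline f$), so the translation-invariance used in the bound should be justified by restricting to the interior $t>\varepsilon$ or by extending $f,g$ across $t=0$; on the spatial torus $\O=\mathbb{T}^N$ no such boundary issue arises.
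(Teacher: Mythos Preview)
The paper does not supply its own proof of this lemma: it simply states that the result ``was proved by Lions in \cite{L}'' and adopts the formulation from \cite{LV}. So there is no in-paper argument to compare against; your proposal fills in what the paper outsources to the literature.

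Your argument is the standard Friedrichs/DiPerna--Lions proof and is correct. The integral representation of the commutator, the change of variables $y=z-\varepsilon w$ together with the fundamental theorem of calculus to cancel the factor $\varepsilon^{-1}$, the Minkowski--H\"older estimate, and the density argument for the convergence when $r<\infty$ are all exactly the usual steps. You also rightly flag the boundary issue at $t=0$ on the half-line.

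The one point worth emphasizing is the discrepancy you yourself note: your uniform bound controls $I_\varepsilon$ by $\|\nabla f\|_{L^p}$ (the full space--time gradient), whereas the lemma as stated only assumes $\partial f\in L^p$ for a \emph{single} partial derivative. This is a genuine mismatch between the literal hypothesis and what your argument actually uses, and it is not obvious how to close the bound with only one directional derivative of $f$ when the mollifier acts in all variables. In the paper's applications, however, $f=\rho$ and both $\rho_t$ (Lemma~\ref{estimate on density t}) and $\nabla_x\rho$ (via $\nabla\sqrt{\rho}\in L^\infty(0,T;L^2)$ and the bound on $\rho$) are available, so the stronger hypothesis $\nabla_{t,x}f\in L^p$ is in force whenever the lemma is invoked, and nothing is lost.
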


\vskip0.3cm

With Lemma \ref{estimate on density t}-Lemma \ref{Lions's lemma} in hand, we are ready to show Theorem \ref{main result}.
Here we introduce a new function $\Phi=\overline{\psi(t)\overline{\u}}$.
Note that $\psi(t)\in  \mathfrak{D}(0,+\infty)$ is a test function,
where $\mathfrak{D}(0,+\infty)$ is a class of all smooth compactly supported functions in $(0,+\infty)$. In particular,  this function vanishes  close $t=0$.
However, it is needed to extend the result for $\psi(t)\in  \mathfrak{D}(-1,+\infty)$.\\
 Note that, $\psi(t)$ is compactly supported in $(0,\infty).$ $\Phi$ is well defined on $(0,\infty)$ for $\varepsilon$ small enough. Using it to test the second equation in \eqref{NS equation}, one obtains
\begin{equation*}
\int_0^T\int_{\O}\overline{\Phi}\left((\rho\u)_t+\Dv(\rho\u\otimes\u)+\nabla \rho^{\gamma}-\Dv(\rho\mathbb{D}\u)\right)\,dx\,dt=0,
\end{equation*}
which in turn yields
\begin{equation}
\label{weak formulation}
\int_0^T\int_{\O}\psi(t)\overline{\u}\overline{\left((\rho\u)_t+\Dv(\rho\u\otimes\u)+\nabla \rho^{\gamma}-\Dv(\rho\mathbb{D}\u)\right)}\,dx\,dt=0,
\end{equation}
where we used a fact  $\eta(-t,-x)=\eta(t,x).$

The first term in \eqref{weak formulation} shows that
\begin{equation}
\begin{split}
\label{the first term}
\int_0^T\int_{\O}\psi(t)\overline{\u}\overline{(\rho\u)_t}\,dx&=\int_0^T\int_{\O}\psi(t)\left(\overline{(\rho\u)_t}-(\rho\overline{\u})_t\right)\overline{\u}\,dx\,dt+\int_0^T\int_{\O}\psi(t)(\rho\overline{\u})_t\overline{\u}\,dx\,dt
\\&=A +\int_0^T\int_{\O}\psi(t)\rho\partial_t{\frac{|\overline{\u}|^2}{2}}\,dx\,dt+\int_0^T\int_{\O}\psi(t)\rho_t|\overline{\u}|^2\,dx\,dt.
\end{split}
\end{equation}
Similarly, the second term in \eqref{weak formulation} gives us
\begin{equation}
\begin{split}
\label{the second term}
\int_0^T\int_{\O}\psi(t)&\overline{\u}\overline{\Dv(\rho\u\otimes\u)}\,dx\,dt=
\int_0^T\int_{\O}\psi(t)\left(\Dv(\overline{\rho\u\otimes\u)}-\Dv(\rho\u\otimes\overline{\u})\right)\overline{\u}\,dx\,dt
\\&\quad\quad\quad\quad\quad\quad\quad\quad\quad+\int_0^T\int_{\O}\psi(t)\Dv(\rho\u\otimes\overline{\u})\overline{\u}\,dx\,dt
\\&=B+\int_0^T\int_{\O}\psi(t)\rho\u\cdot\nabla\frac{|\overline{\u}|^2}{2}\,dx+\int_0^T\int_{\O}\psi(t)\Dv(\rho\u)|\overline{\u}|^2\,dx\,dt
\\&=B+\int_0^T\int_{\O}\psi(t)\rho_t\frac{|\overline{\u}|^2}{2}\,dx+\int_0^T\int_{\O}\psi(t)\Dv(\rho\u)|\overline{\u}|^2\,dx\,dt.
\end{split}
\end{equation}
Thanks to Lemma \ref{estimate on density t}, the last term of the right-hand side in \eqref{the first term} and the second term of the right-hand side in \eqref{the second term} are well defined. Combining \eqref{the first term}-\eqref{the second term}, the first two terms in \eqref{weak formulation} are given by
\begin{equation}
\label{the first two terms}
\int_0^T\int_{\O}\psi(t)\left(\frac{1}{2}\rho|\overline{\u}|^2\right)_t\,dx\,dt+A+B.
\end{equation}
Next, the last term in \eqref{weak formulation} gives us
\begin{equation}
\label{the last term}
\begin{split}
\int_0^T\int_{\O}\psi(t)\overline{\Dv(\rho\mathbb{D}\u)}\overline{\u}\,dx\,dt&=
\int_0^T\int_{\O}\psi(t)\left(\overline{\Dv(\rho\mathbb{D}\u)}-\Dv(\rho\overline{\mathbb{D}\u})\right)\overline{\u}\,dx\,dt
\\&+\int_0^T\int_{\O}\psi(t)\Dv(\rho\overline{\mathbb{D}\u})\overline{\u}\,dx\,dt
\\&=D-\int_0^T\int_{\O}\psi(t)\rho|\overline{\mathbb{D}\u}|^2\,dx\,dt.
\end{split}
\end{equation}
 We estimate the third term in \eqref{weak formulation} as follows
\begin{equation}
\label{the third term}
\begin{split}
&\int_0^T\int_{\O}\psi(t)\overline{\nabla\rho^{\gamma}}\overline{\u}\,dx\,dt=
\frac{\gamma}{\gamma-1}\int_0^T\int_{\O}\psi(t)\overline{\rho\nabla\rho^{\gamma-1}}\overline{\u}\,dx\,dt
\\&=\frac{\gamma}{\gamma-1}\int_0^T\int_{\O}\psi(t)\left(\overline{\rho\nabla\rho^{\gamma-1}}-\rho\overline{\nabla\rho^{\gamma-1}}\right)\overline{\u}\,dx\,dt
+\frac{\gamma}{\gamma-1}\int_0^T\int_{\O}\psi(t)\rho\overline{\nabla\rho^{\gamma-1}}\overline{\u}\,dx\,dt
\\&=E-\frac{\gamma}{\gamma-1}\int_0^T\int_{\O}\psi(t)\Dv(\rho\overline{\u})\overline{\rho^{\gamma-1}}\,dx\,dt.
\end{split}
\end{equation}
The second term in the last equality of \eqref{the third term} shows that
\begin{equation}
\label{the second part of pressure term}
\begin{split}&
\frac{\gamma}{\gamma-1}\int_0^T\int_{\O}\psi(t)\Dv(\rho\overline{\u})\overline{\rho^{\gamma-1}}\,dx\,dt\\&
=
\frac{\gamma}{\gamma-1}\int_0^T\int_{\O}\psi(t)\left(\Dv(\rho\overline{\u})-\overline{\Dv(\rho\u)}\right)\overline{\rho^{\gamma-1}}\,dx\,dt+\frac{\gamma}{\gamma-1}\int_0^T\int_{\O}\psi(t)\overline{\Dv(\rho\u)} \,\,\overline{\rho^{\gamma-1}}\,dx\,dt
\\&=E_{21}-\frac{\gamma}{\gamma-1}\int_0^T\int_{\O}\psi(t)\overline{\rho}_t\,\,\overline{\rho^{\gamma-1}}\,dx\,dt.
\end{split}
\end{equation}
Meanwhile, the last term in \eqref{the second part of pressure term} is as follows
\begin{equation}
\label{the third part of pressure term}
\begin{split}&
\frac{\gamma}{\gamma-1}\int_0^T\int_{\O}\psi(t)\overline{\rho}_t\,\,\overline{\rho^{\gamma-1}}\,dx\,dt
\\&=
\frac{\gamma}{\gamma-1}\int_0^T\int_{\O}\psi(t)(\overline{\rho}_t-\rho_t)\,\,\overline{\rho^{\gamma-1}}\,dx\,dt+
\frac{\gamma}{\gamma-1}\int_0^T\int_{\O}\psi(t)\,\left(\rho_t\overline{\rho^{\gamma-1}}-\rho_t\rho^{\gamma-1}\right)\,dx\,dt
\\&+
\frac{\gamma}{\gamma-1}\int_0^T\int_{\O}\psi(t)\rho_t\rho^{\gamma-1}\,dx\,dt
\\&=E_{31}+E_{32}+\frac{1}{\gamma-1}\int_0^T\int_{\O}\psi(t)(\rho^{\gamma})_t\,dx\,dt
\end{split}
\end{equation}
Thanks to \eqref{the first two terms}-\eqref{the third part of pressure term}, \eqref{weak formulation} gives us
\begin{equation*}
\begin{split}
&\int_0^T\int_{\O}\psi(t)\left(\frac{1}{2}\rho|\overline{\u}|^2+\frac{\rho^{\gamma}}{\gamma-1}\right)_t\,dx\,dt
\\&+\int_0^T\int_{\O}\psi(t)\rho|\overline{\mathbb{D}\u}|^2\,dx\,dt
+A+B-D+E-E_{21}+E_{31}+E_{32}=0.
\end{split}
\end{equation*}
This yields,
\begin{equation}
\begin{split}
\label{weak energy equality}
&-\int_0^T\int_{\O}\psi_t\left(\frac{1}{2}\rho|\overline{\u}|^2
+\frac{\overline{\rho^{\gamma}}}{\gamma-1}\right)\,dx\,dt
\\&\quad\quad\quad\quad\quad\quad\quad+\int_0^T\int_{\O}\psi(t)\rho|\overline{\mathbb{D}\u}|^2\,dx\,dt
+R_{\varepsilon}(t,x)=0,
\end{split}
\end{equation}
where $R_{\varepsilon}(t,x)=
A+B-D+E-E_{21}+E_{31}+E_{32}.$\\

Note that \eqref{bounded away from zero} and \eqref{condition for velocity}, one obtains
\begin{equation}
\label{convegence first term}
\int_0^T\int_{\O}\frac{1}{2}\rho|\overline{\u}|^2\psi_t\,dx\,dt\to \int_0^T\int_{\O}\frac{1}{2}\rho|\u|^2\psi_t\,dx\,dt\;\;\text{as }\;\varepsilon\to0.
\end{equation}

Thanks to \eqref{bounded away from zero} and \eqref{bounded in H1}, we find that, for any $\varepsilon$ tends to zero,
\begin{equation}
\int_0^T\int_{\O}\psi(t)\rho|\overline{\mathbb{D}\u}|^2\,dx\,dt\to \int_0^T\int_{\O}\psi(t)\rho|\mathbb{D}\u|^2\,dx\,dt.
\end{equation}

\vskip0.3cm

The next step is to make use of Lemma \ref{Lions's lemma} to prove  \begin{equation}
\label{the rest goes to zero}
R_{\varepsilon}(t,x)\to 0
\end{equation}
 as $\varepsilon$ goes to zero.
First, we assume that $\u$ is bounded in $L^r(0,T;L^s(\O))$. We will improve this restriction later. Under this restriction,
 by Lemma \ref{estimate on density t}, $\rho_t$ is uniformly bounded in $L^r(0,T;L^{\frac{2s}{s+2}}(\O))+L^{2}(0,T;L^2(\O))$.
Thus, Lemma \ref{Lions's lemma} gives us
\begin{equation*}
\begin{split}
|A|&\leq\|\psi(t)\|_{L^{\infty}(0,T)}\int_0^T\int_{\O}\left|\overline{\u}[\overline{(\rho\u)_t}-(\rho\overline{\u})_t]\right|\,dx\,dt
\\&\leq C\|\psi(t)\|_{L^{\infty}(0,T)}\|\rho_t\|_{L^2(0,T;L^{\frac{2s}{2+s}}(\O))}\|\u\|^2_{L^r(0,T;L^s(\O))},
\end{split}
\end{equation*}
for any $r\geq 4$ and $ s\geq 6.$\\

Meanwhile, for any $\u\in L^p(0,T;L^q(\O))$,  by Lemma \ref{LpLq estimate}, one obtains that for $0<\alpha < 1$,
\begin{equation}
\label{relationship p q}
 \frac{1-\alpha}{p}= \frac{1}{r},\quad\text{ and } \frac{\alpha}{2}+\frac{1-\alpha}{q}= \frac{1}{s}
\end{equation}with $q\geq 6.$ This gives us, for any $0<\alpha<1$
$$(\frac{1}{p}+\frac{1}{q})(1-\alpha)\leq (\frac{1}{p}+\frac{1}{q})(1-\alpha)+\frac{\alpha}{12}\leq \frac{5}{12}(1-\alpha)$$
and thus  $$\frac{1}{p}+\frac{1}{q}\leq\frac{5}{12}$$
with  $q\geq 6.$

 Thanks to Lemma \ref{Lions's lemma} again, as $\varepsilon$ tends to zero, we have
\begin{equation}
\label{term A goes to 0}
\begin{split}
A\to 0.
\end{split}
\end{equation} \\
 An argument similar to that given above
in the control for $A$ shows that the terms $B,\,D,\,E_{21}$ in $R_{\varepsilon}(t,x)$ here converge  to zero, as $\varepsilon\to 0$, in particular,
\begin{equation}
\label{other rest terms}
B-D-E_{21}\to 0.
\end{equation}
Note that $\rho_t$ is bounded in $L^p(0,T;L^{\frac{2q}{q+2}}(\O))+L^{2}(0,T;L^2(\O))$ and $\rho$ is bounded in $L^{\infty}(0,T;\O)$, we conclude that
$E_{31},\,E_{32}$ goes to zero as $\varepsilon$ tends to zero. Similar argument can show that $E\to 0$ as $\varepsilon$ goes to zero. Thus,
$$R_{\varepsilon}\to 0$$
 as $\varepsilon\to 0.$\\

We are ready to pass to the limits in \eqref{weak energy equality}. Letting $\varepsilon$ goes to zero, using \eqref{convegence first term}-\eqref{the rest goes to zero}, what we have proved is that in the limit,
\begin{equation}
\label{limit equation first}
-\int_0^T\int_{\O}\psi_t\left(\frac{1}{2}\rho|\u|^2+\frac{\rho^{\gamma}}{\gamma-1}\right)\,dx\,dt+\int_0^T\int_{\O}\psi(t)\rho|\mathbb{D}\u|^2\,dx\,dt=
0,
\end{equation}
for any test function $\psi\in \mathfrak{D}(0,\infty).$\\

The final step is to extend our result \eqref{limit equation first} for the test function $\psi(t)\in \mathfrak{D}(-1,\infty).$ To this end, it is necessary for us to have the continuity of $\rho(t)$ and $(\sqrt{\rho}\u)(t)$ in the strong topology at $t=0$. Adopting the similar argument in Vasseur-Yu \cite{VY}, what we expected can be done.\\

Here we remark that $\sqrt{\rho}\u$ is bounded in $L^{\infty}(0,T;L^2(\O)).$
Using  \eqref{bounded away from zero} and
$$\rho_t=-\Dv(\rho\u),$$
 we have \begin{equation}
 \label{density t bound}\rho_t\in L^2(0,T;H^{-1}(\O)).
 \end{equation}

 Note that, $\nabla\sqrt{\rho}$ is bounded in $L^{\infty}(0,T;L^2(\O))$ and $\rho$ is bounded away from infinity. Thus,
the identity $$\nabla\rho=2\sqrt{\rho}\nabla\sqrt{\rho}$$
gives us
\begin{equation}
\label{density H1}
\nabla\rho\in L^{2}(0,T;L^2(\O)).
\end{equation}

From \eqref{density t bound} and \eqref{density H1},
one obtains \begin{equation}
\label{continuity for density}\rho\in C([0,T];L^2(\O)),
\end{equation}
thanks to Theorem 3 on page 287 of book \cite{Evans}.
\\
 Using $$\sqrt{\rho}_t=-\Dv(\sqrt{\rho}\u)+\frac{1}{2}\sqrt{\rho}\Dv\u,$$
 we deduce
 \begin{equation*}\sqrt{\rho}_t\in L^2(0,T;H^{-1}(\O)).
 \end{equation*}
 Because $\nabla\sqrt{\rho}$ is bounded in $L^{\infty}(0,T;L^2(\O))$, 
 then
 \begin{equation}
\label{half-continuity for density}\sqrt{\rho}\in C([0,T];L^2(\O)).
\end{equation}

In the meanwhile, we see
\begin{equation}
\begin{split}
\label{control of sqrt density and u}
&\text{ess}\limsup_{t\to0}\int_{\O}|\sqrt{\rho}\u-\sqrt{\rho_0}\u_0|^2\,dx
\\&\leq\text{ess}\limsup_{t\to0}\left(\int_{\O}(\frac{1}{2}\rho|\u|^2+\frac{\rho^{\gamma}}{\gamma-1})\,dx-\int_{\O}(\frac{1}{2}\rho_0|\u_0|^2+\frac{\rho_0^{\gamma}}{\gamma-1})|^2\,dx\right)
\\&+\text{ess}\limsup_{t\to0}\left(2\int_{\O}\sqrt{\rho_0}\u_0(\sqrt{\rho_0}\u_0-\sqrt{\rho}\u)\,dx+\int_{\O}(\frac{\rho_0^{\gamma}}{\gamma-1}-\frac{\rho^{\gamma}}{\gamma-1})\right)
.
\end{split}
\end{equation}

Using \eqref{energy inequality} and the convexity of $\rho\mapsto \rho^{\gamma}$, we have
\begin{equation}
\begin{split}
\label{the rest term for initial time}
&\text{ess}\limsup_{t\to0}\int_{\O}|\sqrt{\rho}\u-\sqrt{\rho_0}\u_0|^2\,dx
\\&\leq 2\text{ess}\limsup_{t\to0}\int_{\O}\sqrt{\rho_0}\u_0(\sqrt{\rho_0}\u_0-\sqrt{\rho}\u)\,dx
\\&=W.
\end{split}
\end{equation}

To show the continuity of $(\sqrt{\rho}\u)(t)$ in the strong topology at $t=0$, we need $W=0$. To this end,
we introduce the time evolution of the integral averages
$$t\in(0,T)\longmapsto\int_{\O}(\rho\u)(t,x)\cdot\psi(x)\,dx,$$ which
is defined by
\begin{equation}
\begin{split}
\label{integral for mass}
&\frac{d}{dt}\int_{\O}(\rho\u)(t,x)\cdot\psi(x)\,dx
\\&\quad\quad\quad\quad=
\int_{\O}\rho\u\otimes\u:\nabla \psi\,dx+\int_{\O}\rho^{\gamma}\Dv\psi\,dx+\int_{\O}\rho\mathbb{D}\u\nabla\psi\,dx,
\end{split}
\end{equation}
where $\psi(x)\in C_0^{\infty}(\O)$ is a test function.
All estimates from \eqref{energy inequality}, \eqref{bounded away from zero} and \eqref{condition for velocity} imply \eqref{integral for mass} is continuous function with respect to $t\in[0,T].$
On the other hand, we have $$\rho\u\in L^{\infty}(0,T;L^{2}(\O)),$$
and hence
\begin{equation}
\label{weak continuous}
\rho\u\in C([0,T]; L^{2}_{\text{weak}}(\O)).
\end{equation}
We consider $W$ as follows
\begin{equation}\begin{split}
\label{show B is zero}
& W=2\text{ess}\limsup_{t\to0}\int_{\O}\frac{\sqrt{\rho_0}\u_0}{\sqrt{\rho}}(\sqrt{\rho\rho_0}\u_0-\rho\u)\,dx
\\&\leq 2\text{ess}\limsup_{t\to0}\int_{\O}\frac{\sqrt{\rho_0}\u_0}{\sqrt{\rho}}(\sqrt{\rho\rho_0}\u_0-\rho_0\u_0)\,dx
+ 2\text{ess}\limsup_{t\to0}\int_{\O}\frac{\sqrt{\rho_0}\u_0}{\sqrt{\rho}}(\rho_0\u_0-\rho\u)\,dx,
\end{split}
\end{equation}
where we used \eqref{bounded away from zero}.

Using \eqref{half-continuity for density}
and \eqref{weak continuous} in \eqref{show B is zero},  one deduces $W=0$ provided that $\sqrt{\rho_0}\u_0\in L^4(\O).$
 Thus, we have
\begin{equation*}
\text{ess}\limsup_{t\to0}\int_{\O}|\sqrt{\rho}\u-\sqrt{\rho_0}\u_0|^2\,dx=0,
\end{equation*}
which gives us
\begin{equation}
\label{continuity in strong topology 2}\sqrt{\rho}\u\in C([0,T];L^2(\O)).
\end{equation}

 By \eqref{continuity for density} and \eqref{continuity in strong topology 2}, we get
 \begin{equation}
 \label{limit000}\lim_{\tau\to0}\frac{1}{\tau}\int_0^{\tau}\int_{\O}\left(\frac{1}{2}\rho|\u|^2+\frac{\rho^{\gamma}}{\gamma-1}\right)\,dx\,dt=\int_{\O}\left(\frac{1}{2}\rho_0|\u_0|^2+\frac{\rho_0^{\gamma}}{\gamma-1}\right)\,dx.
 \end{equation}
 \\

 Choosing the following test function for \eqref{limit equation first},
 \begin{equation*}
 \begin{split}
 &\psi_{\tau}(t)=\psi(t)\;\;\text{ for } \,t\geq \tau+\frac{1}{K},\;\;\psi_{\tau}(t)=\frac{t}{\tau}\;\;\text{ for } t\leq \tau, \;\; \psi_{\tau} \text{ is a $C^1$ smooth function,}
 \end{split}
 \end{equation*}then
 we get
 \begin{equation}
\begin{split}
\label{AASS}
&-\int_{\tau+\frac{1}{K}}^T\int_{\O}\psi_t\left(\frac{1}{2}\rho|\u|^2+\frac{\rho^{\gamma}}{\gamma-1}\right)\,dx\,dt+\int_0^T\int_{\O}\psi_{\tau}(t)\rho|\mathbb{D}\u|^2\,dx\,dt
\\
&-\int_{\tau}^{\tau+\frac{1}{K}}\int_{\O}(\psi_{\tau})_t\left(\frac{1}{2}\rho|\u|^2+\frac{\rho^{\gamma}}{\gamma-1}\right)\,dx\,dt
=\frac{1}{\tau}\int_0^{\tau}\int_{\O}\left(\frac{1}{2}\rho|\u|^2+\frac{\rho^{\gamma}}{\gamma-1}\right)\,dx\,dt.
\end{split}
\end{equation}
Note that, \begin{equation*}
\begin{split}&
\left|\int_{\tau}^{\tau+\frac{1}{K}}\int_{\O}(\psi_{\tau})_t\left(\frac{1}{2}\rho|\u|^2+\frac{\rho^{\gamma}}{\gamma-1}\right)\,dx\,dt\right|
\\&\leq \frac{C}{K}\int_{\O}\left(\frac{1}{2}\rho|\u|^2+\frac{\rho^{\gamma}}{\gamma-1}\right)\,dx
\to0
\end{split}
\end{equation*}
as $K$ goes to large. Letting $K\to\infty,$ from \eqref{AASS} we derive
\begin{equation}
\begin{split}
\label{key limit-aaa}
&-\int_{\tau}^T\int_{\O}\psi_t\left(\frac{1}{2}\rho|\u|^2+\frac{\rho^{\gamma}}{\gamma-1}\right)\,dx\,dt+\int_0^T\int_{\O}\psi_{\tau}(t)\rho|\mathbb{D}\u|^2\,dx\,dt
\\
&
=\frac{1}{\tau}\int_0^{\tau}\int_{\O}\left(\frac{1}{2}\rho|\u|^2+\frac{\rho^{\gamma}}{\gamma-1}\right)\,dx\,dt.
\end{split}
\end{equation}
By \eqref{limit000},
 passing into the limit as $\tau\to 0$ in \eqref{key limit-aaa}, one obtains
 \begin{equation}
\label{the second level limit}
\begin{split}
-\int_{0}^T\int_{\O}\psi_t\left(\frac{1}{2}\rho|\u|^2+\frac{\rho^{\gamma}}{\gamma-1}\right)\,dx\,dt&+\int_0^T\int_{\O}\psi(t)\rho|\mathbb{D}\u|^2\,dx\,dt
\\&
=\int_{\O}\left(\frac{1}{2}\rho_0|\u_0|^2+\frac{\rho_0^{\gamma}}{\gamma-1}\right)\,dx.
\end{split}
\end{equation}
Taking \begin{equation}\label{psi_name}
\psi(t)\begin{cases}=0 \;\;\;\;\;\quad\quad\quad\quad\text{ if }t\leq \tilde{t}-\frac{\epsilon}{2}
\\ =\frac{1}{2}+\frac{t-\tilde{t}}{\epsilon}\;\;\;\;\;\;\;\;\;\;\text{ if } \tilde{t}-\frac{\epsilon}{2}\leq t\leq \tilde{t}+\frac{\epsilon}{2}
\\ =1\,\;\;\;\;\quad\quad\;\quad\quad\text{ if }  t\geq \tilde{t}+\frac{\epsilon}{2},
\end{cases}\end{equation}
then \eqref{the second level limit} gives for every $\tilde{t}\geq \frac{\epsilon}{2},$
\begin{equation}
\label{the third level limit}
\begin{split}&
\frac{1}{\epsilon}\int_{\tilde{t}-\frac{\epsilon}{2}}^{\tilde{t}+\frac{\epsilon}{2}}\left(\int_{\O}\left(\frac{1}{2}\rho|\u|^2+\frac{\rho^{\gamma}}{\gamma-1}\right)\,dx\right)\,dt
+\int_{\tilde{t}-\frac{\epsilon}{2}}^{\tilde{t}+\frac{\epsilon}{2}}\int_{\O}(\frac{1}{2}+\frac{t-\tilde{t}}{\epsilon})\rho|\mathbb{D}\u|^2\,dx\,dt
\\&+\int_{\tilde{t}+\frac{\epsilon}{2}}^T\int_{\O}\rho|\mathbb{D}\u|^2\,dx\,dt
=\int_{\O}\left(\frac{1}{2}\rho_0|\u_0|^2+\frac{\rho_0^{\gamma}}{\gamma-1}\right)\,dx.
\end{split}
\end{equation}
The second term of  left hand side in \eqref{the third level limit} is controlled as follows
\begin{equation*}
\int_{\tilde{t}-\frac{\epsilon}{2}}^{\tilde{t}+\frac{\epsilon}{2}}\int_{\O}(\frac{1}{2}+\frac{t-\tilde{t}}{\epsilon})\rho|\mathbb{D}\u|^2\,dx\,dt
\leq
\int_{\tilde{t}-\frac{\epsilon}{2}}^{\tilde{t}+\frac{\epsilon}{2}}\int_{\O}\rho|\mathbb{D}\u|^2\,dx\,dt\to 0
\end{equation*}as $\epsilon\to 0.$
Thanks to the Lebesgue point Theorem, \eqref{the third level limit} gives us
\begin{equation*}
\begin{split}&\int_{\O}\left(\frac{1}{2}\rho|\u|^2+\frac{\rho^{\gamma}}{\gamma-1}\right)\,dx
+\int_{0}^T\int_{\O}\rho|\mathbb{D}\u|^2\,dx\,dt
=\int_{\O}\left(\frac{1}{2}\rho_0|\u_0|^2+\frac{\rho_0^{\gamma}}{\gamma-1}\right)\,dx.
\end{split}
\end{equation*}
This ends our proof of Theorem \ref{main result}.\\

\vskip0.3cm

\subsection{Proof of Theorem \ref{result for CNS}}

Now, let us to address the proof of Theorem \ref{result for CNS}.
 We can  modify the above proof slightly to show Theorem \ref{result for CNS}. The main advantage of condition $0<\underline{\rho}\leq \rho(t,x)$ is to have $\nabla\u\in L^2(0,T;L^2(\O))$ in Theorem \ref{main result}. Thus, we can drop this restriction  for the constant viscosities case.
 \\

 The one difference is to show $W=0$ in \eqref{show B is zero}. In fact, we have
\begin{equation*}\begin{split}
& W=2\text{ess}\limsup_{t\to0}\int_{\O}\u_0(\rho_0\u_0-\sqrt{\rho_0\rho}\u)\,dx
\\&\leq 2\text{ess}\limsup_{t\to0}\int_{\O}\u_0(\rho_0\u_0-\rho\u)\,dx
+ 2\text{ess}\limsup_{t\to0}\int_{\O}\u_0(\sqrt{\rho}-\sqrt{\rho_0})\sqrt{\rho}\u\,dx.
\end{split}
\end{equation*}
Applying the similar argument, we can show $$
\rho\u\in C([0,T]; L^{2}_{\text{weak}}(\O)),\;\;\sqrt{\rho}\in C(0,T;L^2(\O)).$$ 
To show $W=0$, we need $\u_0\in L^{k}$ where $\frac{1}{k}+\frac{1}{q}\leq \frac{1}{2}.$ Here, we have to mention this is also true for the degenerate viscosity.\\

After modifying the above one, we are able to follow the same argument to   show Theorem \ref{result for CNS}.

\vskip0.3cm
\bigskip
\section*{Acknowledgments}
The author would like to thank his mentor, Alexis Vasseur, for his  discussions and for his constant encouragement and guidance.

\bigskip\bigskip

\end{document}